\documentclass[10pt]{article}
\usepackage{latexsym,amssymb,amsmath,amsthm,amsfonts}
\usepackage{graphicx}
\usepackage{hyperref}
\usepackage{color}

\textwidth 17true cm \textheight 24true cm
\topmargin=-70pt \oddsidemargin=0pt

\setlength{\parskip}{5pt} \setlength\arraycolsep{2pt}

\newtheorem{lemma}{Lemma}[section]
\newtheorem{proposition}[lemma]{Proposition}
\newtheorem{theorem}[lemma]{Theorem}

\newtheorem{definition}[lemma]{Definition}
\newtheorem{example}[lemma]{Example}


\numberwithin{equation}{section}

\newcommand{\ud}{\mathrm{d}}
\newcommand{\RR}{\mathbb{R}}
\newcommand{\f}{\frac}

\newcommand{\xx}{|x|^2}

\newcommand{\xy}{\langle x,y\rangle}

\newcommand{\pp}[2]{\frac{\partial{#1}}{\partial{#2}}}
\newcommand{\ppp}[3]{\frac{\partial^2{#1}}{\partial{#2}\partial{#3}}}
\newcommand{\pppp}[4]%
  {\frac{\partial^3{#1}}{\partial{#2}\partial{#3}\partial{#4}}}

\newcommand{\p}{\phi}
\newcommand{\ps}{\phi(s)}
\newcommand{\pab}{\alpha\phi(\frac{\beta}{\alpha})}

\newcommand{\gab}{\alpha\phi(b^2,\frac{\beta}{\alpha})}
\newcommand{\pt}{\phi_2}
\newcommand{\po}{\phi_1}
\newcommand{\ptt}{\phi_{22}}
\newcommand{\pot}{\phi_{12}}

\renewcommand{\a}{\alpha}
\renewcommand{\b}{\beta}
\newcommand{\ab}{(\alpha,\beta)}
\newcommand{\ta}{\tilde\alpha}
\newcommand{\tb}{\tilde\beta}
\newcommand{\ha}{\hat\alpha}
\newcommand{\hb}{\hat\beta}
\newcommand{\ba}{\bar\alpha}
\newcommand{\bb}{\bar\beta}

\newcommand{\aij}{a_{ij}}

\newcommand{\bi}{b_i}
\newcommand{\bj}{b_j}

\newcommand{\bbj}{\bar b_j}

\newcommand{\bij}{b_{i|j}}
\newcommand{\taij}{\tilde a_{ij}}

\newcommand{\tbij}{\tilde b_{i|j}}
\newcommand{\haij}{\hat a_{ij}}

\newcommand{\hbij}{\hat b_{i|j}}
\newcommand{\baij}{\bar a_{ij}}

\newcommand{\bbij}{\bar b_{i|j}}

\newcommand{\G}{G^i_\alpha}
\newcommand{\tG}{\tilde G^i_{\tilde\alpha}}
\newcommand{\hG}{\hat G^i_{\hat\alpha}}
\newcommand{\bG}{\bar G^i_{\bar\alpha}}

\newcommand{\rij}{r_{ij}}
\newcommand{\sij}{s_{ij}}
\newcommand{\ri}{r_i}
\newcommand{\si}{s_i}
\newcommand{\rj}{r_j}
\newcommand{\sj}{s_j}

\begin{document}
\title{On dually flat general $\ab$-metrics
\footnotetext{\emph{Keywords}: Finsler metric, general $\ab$-metric, dual flatness, information geometry, deformation.
\\
\emph{Mathematics Subject Classification}: 53B40, 53C60.}}
\author{Changtao Yu}
\date{}
\maketitle

\begin{abstract}
Based on the previous researches, in this paper we study the dual flatness of a special class of Finsler metrics called general $\ab$-metrics, which is defined by a Riemannian metric $\a$ and a $1$-form $\b$. A characterization for such metrics to be locally dually flat under some suitable conditions is provided. Many non-trivial explicit examples are constructed by a new kind of deformation technique. Moreover, the relationship of dual flatness and project flatness of such metrics are shown.
\end{abstract}

\section{Introduction}
In 2000, S.-I. Amari and H. Nagaoka introduced the notion of dual flatness for Riemannian metrics when they studied information geometry\cite{AN1}, and this notion have been extended to general Finsler metrics by Z. Shen in 2007\cite{szm-rfgw}.
A Finsler metric $F$ on a manifold $M$ is said to be {\it locally dually flat} if and only if in an {adapted} local coordinate system $(x^i)$, the function $F=F(x,y)$ satisfies
\begin{eqnarray}\label{dff}
[F^2]_{x^ky^l}y^k-2[F^2]_{x^l}=0.
\end{eqnarray}

For a Riemannian metric $\a=\sqrt{a_{ij}(x)y^iy^j}$, it is known that $\a$ is locally dually flat if and only if in an adapted coordinate system, its fundamental tensor is the Hessian of some local smooth function $\psi(x)$\cite{AN1}, i.e.,
\begin{eqnarray*}
a_{ij}(x)=\ppp{\psi}{x^i}{x^j}(x).
\end{eqnarray*}
In fact, the dual flatness of a Riemannian metric can also be described by its spray\cite{yct-odfr}: {\it $\a$ is locally dually flat if its spray coefficients can be expressed in some coordinate system as}
\begin{eqnarray}\label{duallyflat}
\G=2\theta y^i+\a^2\theta^i,
\end{eqnarray}
{\it where $\theta:=\theta_i(x)y^i$ is a $1$-form and  $\theta^i:=a^{ij}\theta_j$}.

The characterization for dually flat Riemannian metrics is clear. Hence, how to describe the dual flatness for Finsler metrics becomes an interesting problem. However, it is still not easy to be researched for the general case. So we begin with some special kinds of Finsler metrics.

Randers metrics, introduced by a physicist G. Randers in 1941 when he studied general relativity, is an important class of Finsler metrics. Generally, a Randers metric is given in the form $F=\a+\b$ where $\a$ is a Riemannian metric and $\b$ is a $1$-form. But it can also be expressed in another famous form as follows:
\begin{eqnarray}\label{NPE}
F=\f{\sqrt{(1-\bar b^2)\ba^2+\bb^2}}{1-\bar b^2}-\f{\bb}{1-\bar b^2},
\end{eqnarray}
where $\ba$ is also a Riemannian metric, $\bb$ is a $1$-form and $\bar b$ is the norm of $\bb$ with respect to $\ba$. $(\ba,\bb)$ is called the {\it navigation data} of the Randers metric $F$. Based on the results in \cite{csz-oldf}, the author provided a more clear description for dually flat Randers metrics: {\it A Randers metric $F=\a+\b$ is locally dually flat if and only if $\ba$ is locally dually flat and $\bb$ is dually related with respect to $\ba$}\cite{yct-odfr}. The notion of dually related $1$-forms was proposed by the author in \cite{yct-odfr}. The definition is given below:

\begin{definition}
Let $\a$ be a locally dually flat Riemannian metric on a manifold $M$. Suppose that the spray coefficients $\G$ of $\a$ are given in an adapted coordinate system by (\ref{duallyflat}) with some $1$-form $\theta$ on $M$. Then a $1$-form $\b$ on $M$ is said to be {dually related} with respect to $\a$ if
\begin{eqnarray*}
\bij=2\theta_i\bj+c(x)\aij,
\end{eqnarray*}
where $c(x)$ is a scalar function on $M$.
\end{definition}

As a generalization of Randers metrics from the
algebraic point of view, $\ab$-metrics are also defined by a Riemannian metric and a $1$-form and given in the form
$$F=\pab,$$
where $\ps$ is a smooth function. Because of its computability\cite{bcs}, many encouraging results about $\ab$-metrics have been achieved\cite{bcs,lb-szm-onac,szm-oaco,yct-dhfp}.

Recently, Q. Xia gave a local characterization of locally dually flat $\ab$-metrics with dimension $n\geq3$\cite{xoldf}. Later on, the author provide a more clear characterization. The result is much similar to that of Randers metrics: {\em If $F=\pab$ is a non-trivial locally dually flat $\ab$-metric with $n\geq3$, then after some special deformations, $\a$ will turn to be a locally dually flat Riemannian metric $\ba$ and $\b$ will turn to be a $1$-form $\bb$ which is dually related with respect to $\ba$. In this case, $F$ can be reexpressed as the form $F=\phi(\bar b^2,\frac{\bb}{\ba})$}\cite{yct-odfa}.

One can see that the navigation expression (\ref{NPE}) of Randers metrics is also given in the form
$$F=\gab.$$
Actually, such kind of Finsler metrics are belong to the metrical category called {\em general $\ab$-metric}, which is introduced by the author as a generalization of Randers metrics from the
geometric point of view\cite{yct-zhm-onan}. General $\ab$-metrics include not only all the $\ab$-metrics and the spherically symmetric Finsler metrics\cite{huang2} naturally, but also part of Bryant's metrics\cite{Br2,yct-zhm-onan} and fourth root metrics\cite{lb-szm-pffr}.

The main purpose of this paper is to describe and construct dually flat general $\ab$-metrics. It must be declare firstly that we will assume additionally that $\a$ is dually flat and $\b$ is dually related to $\a$. According to the discussions for Randers metrics and $\ab$-metrics, one can see that the dual flatness of a Randers metric or a $\ab$-metric is always arises from that of some Riemannian metric, and it is the dually related $1$-forms preserve the dual flatness. Hence, we believe that the assumption here is reasonable and appropriate.

The main result is given below:

\begin{theorem}\label{main1}
Let $F=\gab$ be a Finsler metric on an open domain $U\subseteq\RR^n$. Suppose that $\a$ and $\b$ satisfy
\begin{eqnarray}\label{conditions}
\G=2\theta y^i+\a^2\theta^i,\qquad\bij=c(x)\aij+2\theta_i\bj,
\end{eqnarray}
where $\theta$ is a $1$-form and $c(x)$ is a scalar function such that $c(x)\neq-2\theta_kb^k$. Then $F$ is dually flat on $U$ if and only if the function $\phi$ satisfies the following PDE:
\begin{eqnarray}\label{pde1}
\pt^2+\p\ptt+2s\po\pt+2s\p\pot-4\p\po=0.
\end{eqnarray}
\end{theorem}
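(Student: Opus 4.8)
The plan is to verify the dual flatness equation \eqref{dff} by a direct computation carried out entirely in the adapted coordinate system in which \eqref{duallyflat} holds. Writing $A:=\a^2=a_{ij}y^iy^j$, I regard $F^2=A\,\p^2$ as depending on $x$ only through the three scalars $A$, $\b=b_iy^i$ and $b^2$, together with $s=\b/\a$, where $\po,\pt,\pot,\ptt$ are the partial derivatives of $\p=\phi(b^2,s)$ in its two arguments. Everything then reduces to expressing the $x$-derivatives of these building blocks algebraically and assembling $[F^2]_{x^l}$ and $[F^2]_{x^ky^l}y^k$.

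The first and most delicate step is to compute the $x$-derivatives of $A$, $\b$, $b^2$ and $s$. Here the dual flatness of $\a$ enters decisively: by the Hessian characterization recalled above, in the adapted coordinates $\partial_k a_{ij}$ is totally symmetric in $i,j,k$, so the Christoffel symbols of the first kind are fully symmetric and $A_{x^l}=4a_{lm}G^m_\a$; substituting \eqref{duallyflat} gives the clean formula $A_{x^l}=8\theta y_l+4A\theta_l$ with $y_l:=a_{lm}y^m$. For $\b$ I would write $\partial_l b_i=\bij+b_m\Gamma^m_{il}$, contract with $y^i$, and dispose of the only uncontracted-connection term $b_m\Gamma^m_{il}y^i$ via the general identity $\Gamma^m_{il}y^i=(G^m_\a)_{y^l}$ followed by \eqref{duallyflat}; together with $\bij=c\,a_{ij}+2\theta_ib_j$ this yields $\b_{x^l}=(c+2\rho)y_l+4\theta b_l+2\b\theta_l$, where $\rho:=\theta_kb^k$. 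In the same way $[b^2]_{x^l}=2b^kb_{k|l}=2(c+2\rho)b_l$, and finally $s_{x^l}=\b_{x^l}/\a-s\,A_{x^l}/(2A)$, in which the $\theta_l$-contributions cancel.

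With these formulas in hand I would assemble $[F^2]_{x^l}$ directly and obtain $[F^2]_{x^ky^l}y^k$ from $[F^2]_{x^ky^l}y^k=T_{y^l}-[F^2]_{x^l}$, where $T:=[F^2]_{x^k}y^k$. A reassuring checkpoint is that $T$ collapses to $T=12\theta A\,\p^2+2(c+2\rho)\a^3\p\,(\pt+2s\po)$, all residual $\theta$-dependence having cancelled. Forming the dual flatness quantity $D_l:=[F^2]_{x^ky^l}y^k-2[F^2]_{x^l}=T_{y^l}-3[F^2]_{x^l}$ and collecting the coefficients of the three covectors $y_l$, $b_l$, $\theta_l$, I expect the $\theta_l$-coefficient to vanish identically and the other two to be proportional, so that $D_l$ factors as
\[
D_l=2(c+2\rho)\,\a\,\big(\pt^2+\p\ptt+2s\po\pt+2s\p\pot-4\p\po\big)(\a b_l-s\,y_l),
\]
in which the scalar factor is exactly the left-hand side of \eqref{pde1}.

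The conclusion then follows by reading off this identity. Since $c+2\rho=c(x)+2\theta_kb^k\neq0$ by hypothesis, $\a\neq0$, and the covector $\a b_l-s\,y_l$ is not identically zero, the vanishing of $D_l$ for all $y$ is equivalent to the scalar equation \eqref{pde1}; for the necessity direction one lets $x$ and $y$ vary so that $(b^2,s)$ sweeps a full two-dimensional region, upgrading pointwise vanishing to \eqref{pde1} as an identity in $(b^2,s)$. I expect the main difficulty to be organizational rather than conceptual: the genuinely subtle point is the computation of $\b_{x^l}$ and $s_{x^l}$, because the naive expressions involve the uncontracted Christoffel symbols that \eqref{duallyflat} does not control, and the cure is to contract with $y^i$ before invoking $\Gamma^m_{il}y^i=(G^m_\a)_{y^l}$ and the total symmetry of $\partial_k a_{ij}$. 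Tracking the numerous $\theta$- and $s$-terms through the ensuing cancellations, and recognizing the final factorization, is where the care lies; the hypothesis $c(x)\neq-2\theta_kb^k$ is precisely what prevents $D_l$ from vanishing trivially and so makes \eqref{pde1} necessary.
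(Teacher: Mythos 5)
Your proposal is correct and takes essentially the same route as the paper: the same derivative formulas for $\a_{x^l}$, $\b_{x^l}$, $(b^2)_{x^l}$ and $s_{y^l}$ extracted from (\ref{conditions}) (including the same device $\b_{x^l}=b_{m|l}y^m+b_m\pp{G^m_\a}{y^l}$ to handle the connection terms), leading to the identical factorization of the dual-flatness quantity as $2\bar c\,\a\,(\pt^2+\p\ptt+2s\po\pt+2s\p\pot-4\p\po)(\a b_l-sy_l)$ with $\bar c=c+2\theta_kb^k\neq0$, which matches the paper's conclusion from (\ref{temp1}) and (\ref{temp2}). Your only deviation---computing $T=[F^2]_{x^k}y^k$ first and using $[F^2]_{x^ky^l}y^k=T_{y^l}-[F^2]_{x^l}$ instead of forming the full mixed derivative $[F^2]_{x^ky^l}$ and contracting---is a harmless computational shortcut, and your intermediate formula for $T$ checks out against the paper's expressions.
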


Notice that $\phi_1$ means the derivation of $\phi$ with respect to the first variable $b^2$.

It should be pointed out that if the scale function $c(x)$ satisfies $c(x)=-2\theta_kb^k$, then according to the proof of Theorem \ref{main1}, $F=\gab$ will be always dually flat for any function $\phi(b^2,s)$. So it will be regarded as a trivial case. In another word, such a dually related $1$-form is trivial.

We have reason to conjecture that all the dually flat general $\ab$-metrics can only be obtained by this way. More specifically, we guess that if $F=\gab$ is a locally dually flat Finsler metric on a manifold with dimension $n\geq3$, then after necessary reexpressing in a new form $F=\ba\bar\phi(\bar b^2,\frac{\bb}{\ba})$, $\ba$ must be locally dually flat and $\bb$ must be dually related with respect to $\ba$. Moreover, the function $\bar\phi$ must satisfy (\ref{pde1}) if $\bb$ is non-trivial. Actually, it is true for all the locally dually flat $\ab$-metrics. One can check it by Maple combining with the discussions in \cite{yct-odfa}.

The solutions of (\ref{pde1}) are completely determined by Proposition \ref{pppp}. As a corollary, we obtain the following result, which reveals the closely relationship between the dual flatness and the projective flatness of a general $\ab$-metric. It is different from the related result in \cite{lb}.

\begin{theorem}\label{main2}
If $F=\gab$ is a dually flat general $\ab$-metrics on $U\subseteq\RR^n$ in which $\a$ and $\b$ satisfy (\ref{conditions}), then $\check F=\check\a(\phi^2)_2(\check b^2,\frac{\check\b}{\check\a})$ is projectively flat in which $\check\a$ is projectively flat and $\check\b$ is closed and conformal with respect to $\check\a$ so long as $\check F$ is a Finsler metric.

Conversely, if $\check F=\check\a\check\phi(\check b^2,\frac{\check\b}{\check\a})$ id a projectively flat general $\ab$-metrics on $U\subseteq\RR^n$ in which $\check\a$ is projective flat and $\check\b$ is closed and conformal with respect to $\check\a$, then $F=\gab$ is dually flat in which $\a$ and $\b$ satisfy (\ref{conditions}), $\phi$ is given~($C$ is a constant)
\begin{eqnarray}\label{ptod}
\phi(b^2,s)=\sqrt{\int_0^s\check\phi(b^2,\varsigma)\,\ud\varsigma+\f{1}{4}\int_0^{b^2}\check\phi_2(\iota,0)\,\ud\iota+C},
\end{eqnarray}
so long as $F$ is a Finsler metric.
\end{theorem}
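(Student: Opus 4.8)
\emph{Proof strategy.} The plan is to reduce both directions to two matching correspondences: an algebraic one between the defining PDE (\ref{pde1}) and the PDE characterizing projective flatness of a general $\ab$-metric, and a geometric one between the structural conditions (\ref{conditions}) and the hypotheses that $\check\a$ be projectively flat and $\check\b$ be closed and conformal. For the algebraic part, write $\psi:=\phi^2$. Since $\psi_2=2\phi\pt$, $\psi_{22}=2\pt^2+2\phi\ptt$, $\psi_1=2\phi\po$ and $\psi_{12}=2\po\pt+2\phi\pot$, twice the left-hand side of (\ref{pde1}) equals $\psi_{22}+2s\psi_{12}-4\psi_1$; hence (\ref{pde1}) is equivalent to
\begin{equation*}
\psi_{22}+2s\psi_{12}-4\psi_1=0. \tag{$\ast$}
\end{equation*}
Setting $\check\phi:=(\phi^2)_2=\psi_2$ and differentiating $(\ast)$ in $s$ gives
\begin{equation*}
\check\phi_{22}+2s\check\phi_{12}-2\check\phi_1=0, \tag{$\ast\ast$}
\end{equation*}
which is exactly the equation characterizing a projectively flat general $\ab$-metric built on a projectively flat Riemannian metric and a closed and conformal $1$-form. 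Conversely, if $\check\phi$ solves $(\ast\ast)$ and $\phi$ is defined by (\ref{ptod}), then $\psi:=\phi^2$ satisfies $\psi_2=\check\phi$ and $\psi_1=\int_0^s\check\phi_1\,\ud\varsigma+\tfrac14\check\phi_2(b^2,0)$; thus $(\ast)$ holds at $s=0$ because the boundary term in (\ref{ptod}) forces $\psi_1(b^2,0)=\tfrac14\check\phi_2(b^2,0)$, while $\partial_s$ of $(\ast)$ is $(\ast\ast)$, so $(\ast)$ holds identically and $\phi$ solves (\ref{pde1}). This is precisely the computation that singles out the lower limits and the constant $C$ in (\ref{ptod}).

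For the geometric correspondence I would use the deformation $\ab\mapsto(\check\a,\check\b)$ that preserves the norm of the $1$-form, so that $\check b^2=b^2$ and $F=\gab$ is carried to $\check F=\check\a(\phi^2)_2(\check b^2,\frac{\check\b}{\check\a})$. Under this deformation the dual-flatness spray condition $\G=2\theta y^i+\a^2\theta^i$ becomes projective flatness of $\check\a$, and the dually related condition $\bij=c(x)\aij+2\theta_i\bj$ becomes the closed and conformal condition $\check b_{i|j}=\check c(x)\check a_{ij}$ for $\check\b$, the term $2\theta_i\bj$ being absorbed by the deformation; the hypothesis $c(x)\neq-2\theta_kb^k$ is what guarantees this deformation is nondegenerate and invertible, so that the same construction run backwards recovers $(\a,\b,\theta,c)$ obeying (\ref{conditions}) from $(\check\a,\check\b)$.

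Combining the two tracks proves both implications. In the forward direction, (\ref{pde1}) holds by Theorem \ref{main1}, hence $(\ast)$ and $(\ast\ast)$ hold and the deformed data are projectively flat data, so $\check F$ is projectively flat whenever it is a genuine Finsler metric. In the converse, projective flatness of $\check F$ yields $(\ast\ast)$ and the projectively-flat/closed-and-conformal structure; transporting that structure back through the inverse deformation gives (\ref{conditions}), and (\ref{ptod}) produces a $\phi$ satisfying (\ref{pde1}), so $F$ is dually flat by Theorem \ref{main1}. The routine PDE manipulations above are the easy part; the main obstacle is the geometric correspondence, namely verifying that the norm-preserving deformation simultaneously turns the dually-flat spray into a projective one and absorbs the $2\theta_i\bj$ term into the clean closed and conformal condition, together with checking its invertibility under $c(x)\neq-2\theta_kb^k$. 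A secondary point is that $(\ast)$, $(\ast\ast)$ and (\ref{ptod}) guarantee only the defining identities, not strong convexity, which is exactly why both conclusions carry the proviso ``so long as'' the relevant function is a Finsler metric.
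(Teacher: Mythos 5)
Your proposal contains a genuine misstep, although the correct proof is buried inside it. The ``geometric correspondence'' you describe --- a norm-preserving deformation $\ab\mapsto(\check\a,\check\b)$ that turns the conditions (\ref{conditions}) into projective flatness of $\check\a$ plus the closed-and-conformal condition on $\check\b$, with $c(x)\neq-2\theta_kb^k$ guaranteeing invertibility --- is not part of the theorem, is nowhere established in your argument, and as stated does not make sense. The theorem asserts no relation whatsoever between $(\a,\b)$ and $(\check\a,\check\b)$: they are independent data, each subject to its own hypotheses, and the entire content of the statement is the correspondence $\check\phi=(\phi^2)_2$ (resp.\ the integral formula (\ref{ptod})) between the \emph{functions}. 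In particular, no deformation of $(\a,\b)$ can ``carry'' $F=\gab$ to $\check F=\check\a(\phi^2)_2(\check b^2,\frac{\check\b}{\check\a})$, because a change of data alters the arguments $b^2$ and $s$, not the function $\phi$ itself; and the hypothesis $c(x)\neq-2\theta_kb^k$ has nothing to do with invertibility of any deformation --- its role, as the paper explains after Theorem \ref{main1}, is that $\bar c:=c+2\theta_kb^k\neq0$ is what forces the PDE (\ref{pde1}) rather than making dual flatness vacuous. By flagging this phantom correspondence as ``the main obstacle,'' you have misidentified where the substance lies: the actual crux is the cited characterization (\ref{checkp}) of projectively flat general $\ab$-metrics with projectively flat $\check\a$ and closed conformal $\check\b$, which you do invoke correctly.

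Once the spurious geometric track is deleted, what remains of your proposal is precisely the paper's proof, and it is complete. The forward direction is identical: (\ref{pde1}) is equivalent to $\psi_{22}+2s\psi_{12}-4\psi_1=0$ for $\psi=\phi^2$, differentiation in $s$ gives equation (\ref{varphi}) for $\check\phi=\psi_2$, and (\ref{varphi}) coincides with (\ref{checkp}). For the converse your route is a mild (and arguably cleaner) variant: the paper substitutes the general solution $\check\phi=h(b^2)s+f(b^2-s^2)+2s\int_0^sf'(b^2-\sigma^2)\,\ud\sigma$ of (\ref{checkp}) from the proof of Proposition \ref{pppp} into (\ref{ptod}) and recognizes the result as one of the solutions of (\ref{pde1}) listed there, whereas you verify directly that $E:=\psi_{22}+2s\psi_{12}-4\psi_1$ satisfies $E_s=\check\phi_{22}+2s\check\phi_{12}-2\check\phi_1=0$ and $E(b^2,0)=\check\phi_2(b^2,0)-4\cdot\frac{1}{4}\check\phi_2(b^2,0)=0$, so $E\equiv0$; this avoids the explicit solution formula entirely and transparently explains the normalization $\frac{1}{4}\int_0^{b^2}\check\phi_2(\iota,0)\,\ud\iota$ in (\ref{ptod}). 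Your closing remark that the PDE identities do not ensure strong convexity, which is why both conclusions carry the ``so long as'' provisos, is correct and matches the paper.
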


In Section \ref{dd}, we will construct some dually flat Riemannian metrics and their dually related $1$-forms by the data
\begin{eqnarray}\label{aabb}
\a=|y|,\quad\b=\lambda\langle x,y\rangle+\langle a,y\rangle,
\end{eqnarray}
where $\lambda$ is a constant number and $a$ is a constant vector. The corresponding result is given by Proposition \ref{dddd}. One can get infinity many dually flat general $\ab$-metrics combining with Theorem \ref{main1}, Proposition \ref{dddd} and Proposition \ref{pppp}. Two typical examples are listed below, one can fine more examples in Section 5.

The Finsler metric
$$F=\f{\sqrt[4]{1+(\mu+\sigma^2)b^2}\sqrt{(1+\mu b^2)\a^2-\mu\b^2}}{1+\mu b^2}+\f{\sigma\b}{(1+\mu b^2)\sqrt[4]{1+(\mu+\sigma^2)b^2}}$$
are dually flat Randers metrics, where $\a$ and $\b$ are given by (\ref{aabb}), $\mu$ and $\sigma$ are constant numbers. When $\mu=-1$ and $\sigma=1$, the corresponding metrics are the famous generalized Funk metrics, whose dual flatness were first proved in \cite{csz-oldf}

The Finsler metric
\begin{eqnarray}\label{oneone}
F=\f{\left(\sqrt{1+(\mu+\sigma^2)b^2}\cdot\sqrt{(1+\mu b^2)\a^2-\mu\b^2}+\sigma\b\right)^\frac{3}{2}}{(1+\mu b^2)^\frac{3}{2}\sqrt[4]{(1+\mu b^2)\a^2-\mu\b^2}}
\end{eqnarray}
are dually flat general $\ab$-metrics, where $\a$ and $\b$ are given by (\ref{aabb}), $\mu$ and $\sigma$ are constant numbers. Such kind of general $\ab$-metrics are actually belong to the category of $\ab$-metrics because they can be reexpressed as
$$F=\sqrt{\f{(\breve\a+\breve\b)^3}{\breve\a}},$$
so long as we take
$$\breve\a=\f{\left(1+(\mu+\sigma^2)b^2\right)^\frac{3}{4}\sqrt{(1+\mu b^2)\a^2-\mu\b^2}}{(1+\mu b^2)^\frac{3}{2}},\qquad\breve\b=\f{\sigma\left(1+(\mu+\sigma^2)b^2\right)^\frac{1}{4}\b}{(1+\mu b^2)^\frac{3}{2}}.$$

By the way, it should be pointed out that when $\mu=-1$, $\lambda=\sigma=1$ and $a=0$, the above metric is given by
$$F=\sqrt{\frac{(\sqrt{(1-|x|^2)|y|^2+\langle x,y\rangle^2}+\langle x,y\rangle)^3}{(1-|x|^2)^3\sqrt{(1-|x|^2)|y|^2+\langle x,y\rangle^2}}}.$$
It is also obtained by B. Li independently in another different way\cite{lb}.

Finally, when $a=0$, all the dually flat Finsler metrics obtained by this way are just the so-called symmetric Finsler metrics, which are given in the form $F=\phi(|x|^2,\frac{\langle x,y\rangle}{|y})$\cite{huang2}.

\section{Preliminaries}
Let $F$ be a Finsler metric on a $n$-dimensional manifold $M$. The geodesic spray coefficients of $F$ are defined by
$$G^i:=\f{1}{4}g^{il}\left\{[F^2]_{x^ky^l}y^k-[F^2]_{x^l}\right\},$$
where $g^{ij}$ is the inverse of the fundamental tensor $g_{ij}:=[\frac{1}{2}F^2]_{y^iy^j}$. For a Riemannian metric, the spray coefficients are determined by its Christoffel symbols as $G^i(x,y)=\frac{1}{2}\Gamma^i{}_{jk}(x)y^jy^k$.

Suppose that $\phi(b^2,s)$ is a positive smooth function  defined on the domain $|s|\leq b<b_o$ for some positive number (maybe infinity) $b_o$. Then the function $F=\gab$ determines a Finsler metric for any Riemannian metric $\a=\sqrt{a_{ij}(x)y^iy^j}$ and any $1$-form $\b=b_i(x)y^i$ if and only if $\p(b^2,s)$ satisfies
\begin{eqnarray*}
\p-s\pt>0,\quad\p-s\pt+(b^2-s^2)\ptt>0
\end{eqnarray*}
when $n\geq3$ or
\begin{eqnarray*}
\p-s\pt+(b^2-s^2)\ptt>0
\end{eqnarray*}
when $n=2$\cite{yct-zhm-onan}. Such kind of Finsler metrics belong to the metrical category called {\em general $\ab$ metrics}. For a given metric, $b:=\|\b\|_\a$ is the norm of $\b$.

In Section \ref{dd}, we need a special kind of metric deformations called {\em $\b$-deformations}\cite{yct-dhfp,yct-odfr}, which are determined by a Riemannian metric $\a$ and a $1$-form $\b$ and listed below:
\begin{eqnarray*}
&\ta=\sqrt{\a^2-\kappa(b^2)\b^2},\qquad\tb=\b;\\
&\ha=e^{\rho(b^2)}\ta,\qquad\hb=\tb;\\
&\ba=\ha,\qquad\bb=\nu(b^2)\hb.
\end{eqnarray*}
In order to keep the positive definition of $\ta$, the deformation factor $\kappa(b^2)$ must satisfies an additional condition:
\begin{eqnarray}\label{conditiononk}
1-\kappa b^2>0.
\end{eqnarray}

Some basic formulas of $\b$-deformations are listed below. It should be attention that the notation `$\dot b_{i|j}$' always means the covariant derivative of the $1$-form `$\dot\b$' with respect to the corresponding Riemannian metric `$\dot\a$', where the symbol `~$\dot{}$~' can be `~$\tilde{}$~', `~$\hat{}$~', `~$\bar{}$~' or empty in this paper. Moreover, we need the following abbreviations,
\begin{eqnarray*}
&r_{00}:=r_{ij}y^iy^j,~r_i:=r_{li}b^l,~r^i:=a^{ij}r_j,~r_0:=r_iy^i,~r:=r_ib^i,\\
&s_{i0}:=s_{ij}y^j,~s^i{}_0:=a^{ij}s_{j0},~s_i:=s_{li}b^l,~s^i:=a^{ij}s_j,~s_0:=s_ib^i,
\end{eqnarray*}
where $\rij$ and $\sij$ are given by $\rij:=\frac{1}{2}(\bij+b_{j|i})$ and $\sij:=\frac{1}{2}(\bij-b_{j|i})$.

\begin{lemma}\cite{yct-dhfp}\label{beta1}
Let $\ta=\sqrt{\a^2-\kappa(b^2)\b^2}$, $\tb=\b$. Then
\begin{eqnarray*}
\tG&=&\G-\frac{\kappa}{2(1-\kappa b^2)}\big\{2(1-\kappa b^2)\b s^i{}_0+r_{00}b^i+2\kappa s_0\b b^i\big\}\\
&&+\frac{\kappa'}{2(1-\kappa b^2)}\big\{(1-\kappa b^2)\b^2(r^i+s^i)+\kappa r\b^2b^i-2(r_0+s_0)\b
b^i\big\},\\
\tbij&=&\bij+\frac{\kappa}{1-\kappa b^2}\big\{b^2\rij+\bi\sj+\bj\si\big\}
-\frac{\kappa'}{1-\kappa b^2}\big\{r\bi\bj-b^2\bi(\rj+\sj)-b^2\bj(\ri+\si)\big\}.
\end{eqnarray*}
\end{lemma}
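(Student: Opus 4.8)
The plan is to prove the two displayed formulas by working out how the spray coefficients and the covariant derivative of the $1$-form transform under the conformal-type deformation $\ta=\sqrt{\a^2-\kappa(b^2)\b^2}$, $\tb=\b$. The essential point is that $\ta^2=\a^2-\kappa\b^2$ differs from $\a^2$ by a term involving $\b$, whose $\a$-covariant derivative is governed by $\rij$ and $\sij$; everything then reduces to bookkeeping with these tensors. First I would record the relation between the two metric tensors, $\taij=\aij-\kappa\bi\bj$, and invert it using the Sherman--Morrison formula to obtain $\ta^{ij}=a^{ij}+\frac{\kappa}{1-\kappa b^2}b^ib^j$; the condition (\ref{conditiononk}) is exactly what makes this inverse well defined. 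This inverse is the first place where the factor $1-\kappa b^2$ enters, and it will propagate through both formulas.

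Next I would compute $\tbij$, the covariant derivative of $\tb=\b$ with respect to $\ta$. Writing $\tbij=\pp{\bi}{x^j}-\tGkij b_k$, the difference $\tbij-\bij$ equals $-(\tGkij-\Gkij)b_k$, so the whole task is to find the difference of the two Levi-Civita connections. Since $\ta^2$ and $\a^2$ differ by $-\kappa\b^2$, the connection difference $\tGkij-\Gkij$ is a tensor expressible through the $\a$-covariant derivatives of $\kappa\bi\bj$; because $\kappa=\kappa(b^2)$ with $b^2=a^{ij}\bi\bj$, differentiating $b^2$ brings in $\kappa'$ and the contraction $r$ (note $[b^2]_{|k}=2(\rk+\sk)b^k$-type terms upon contraction), which is the origin of the two groups of terms, one carrying $\kappa$ and one carrying $\kappa'$. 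Contracting with $b_k$ and simplifying the symmetrizations in $\bi\sj+\bj\si$ and $\bi(\rj+\sj)+\bj(\ri+\si)$ then yields the stated expression for $\tbij$.

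For the spray coefficients I would use the standard formula for the difference of sprays of two conformally related (here $\b$-deformed) Riemannian metrics: $\tG-\G=\frac{1}{2}\ta^{il}\big(\text{covariant derivatives of }\taij-\aij\text{ contracted with }y\text{'s}\big)$, where the covariant derivatives are taken with respect to $\a$. Feeding in $\taij-\aij=-\kappa\bi\bj$ and again separating the $\kappa$ and $\kappa'$ contributions (the latter from differentiating $\kappa(b^2)$), then raising the free index with $\ta^{ij}=a^{ij}+\frac{\kappa}{1-\kappa b^2}b^ib^j$, produces the coefficient $\frac{1}{2(1-\kappa b^2)}$ in front and the terms $2(1-\kappa b^2)\b s^i{}_0+r_{00}b^i+2\kappa s_0\b b^i$ and the $\kappa'$-block. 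Throughout, I would use the abbreviations $r_{00},s^i{}_0,r^i,s^i,r_0,s_0,r,s_0$ defined in the preamble to the lemma and the antisymmetry $\sij=-s_{ji}$ to collapse terms.

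The main obstacle will be the raising of the index and the careful tracking of which contractions produce $\b$-factors versus $b^i$-factors after applying the inverse $\ta^{ij}$; the extra rank-one term $\frac{\kappa}{1-\kappa b^2}b^ib^j$ in $\ta^{ij}$ interacts with every contracted expression and is responsible for most of the $\frac{\kappa}{1-\kappa b^2}$ and $\frac{\kappa'}{1-\kappa b^2}$ prefactors appearing in the final formulas. Organizing the computation so that the $\kappa$-terms and $\kappa'$-terms are kept separate from the outset, and reducing all symmetric combinations to the standard abbreviations before raising indices, is what keeps the algebra manageable; this is precisely the content quoted from \cite{yct-dhfp}, so I would verify the two identities by direct substitution rather than re-deriving the general deformation theory.
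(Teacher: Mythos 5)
Note first that the paper contains no proof of this lemma: it is quoted verbatim from \cite{yct-dhfp}, so there is no in-paper argument to compare against. Your plan is nevertheless the standard derivation and it checks out. Writing $h_{ij}:=\taij-\aij=-\kappa\bi\bj$, the difference tensor $C^k{}_{ij}=\frac{1}{2}\tilde a^{kl}\left(h_{li|j}+h_{lj|i}-h_{ij|l}\right)$ of the two Levi-Civita connections, together with the Sherman--Morrison inverse $\tilde a^{ij}=a^{ij}+\frac{\kappa}{1-\kappa b^2}b^ib^j$, gives $\tG-\G=\frac{1}{2}C^i{}_{jk}y^jy^k$ and $\tbij=\bij-C^k{}_{ij}b_k$; the contractions $\tilde a^{il}b_l=\frac{1}{1-\kappa b^2}b^i$, $b^lb_{l|j}=\rj+\sj$, $b^lb_{j|l}=\rj-\sj$, $s_lb^l=0$ and $s_{l0}b^l=s_0$ then reproduce both displayed formulas exactly, including every $\frac{1}{1-\kappa b^2}$ prefactor, with the $\kappa'$-blocks arising from differentiating $\kappa(b^2)$ as you say. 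Two small corrections: the identity is $(b^2)_{|k}=2(r_k+s_k)$ with no extra factor of $b^k$ (upon contraction with $b^k$ only $r$ survives, since $s_kb^k=0$), and your closing remark about ``verifying by direct substitution rather than re-deriving the deformation theory'' is redundant---the difference-tensor computation you outline \emph{is} the complete proof, and there is nothing separate to substitute into.
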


\begin{lemma}\cite{yct-dhfp}\label{beta2}
Let $\ha=e^{\rho(b^2)}\ta$, $\hb=\tb$. Then
\begin{eqnarray*}
\hG&=&\tG+\rho'\left\{2(r_0+s_0)y^i-(\a^2-\kappa \b^2)\left(r^i+s^i+\frac{\kappa}{1-\kappa b^2}rb^i\right)\right\},\\
\hbij&=&\tbij-2\rho'\left\{\bi(\rj+\sj)+\bj(\ri+\si)-\frac{1}{1-\kappa b^2}r(\aij-\kappa\bi\bj)\right\}.
\end{eqnarray*}
\end{lemma}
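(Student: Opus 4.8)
The plan is to read the deformation $\ha=e^{\rho(b^2)}\ta$ as a conformal change of the Riemannian metric $\ta$, to push it through the classical conformal transformation of the Levi-Civita connection, and to specialize the conformal factor only at the very end. Writing $\sigma:=\rho(b^2)$ so that $\haij=e^{2\sigma}\taij$, the Christoffel symbols of two conformally related Riemannian metrics satisfy
\begin{eqnarray*}
\hGkij=\tGkij+\dki\sigma_j+\dkj\sigma_i-\taij\tilde\sigma^k,
\end{eqnarray*}
where $\sigma_k:=\partial_k\sigma$ and $\tilde\sigma^k:=\tilde a^{kl}\sigma_l$. Contracting this identity with $y^iy^j$ and using $\taij y^iy^j=\ta^2$ gives, for the spray $\hG=\frac12\hGkij y^jy^k$,
\begin{eqnarray*}
\hG=\tG+\sigma_0 y^i-\frac12\ta^2\,\tilde\sigma^i,\qquad\sigma_0:=\sigma_ky^k.
\end{eqnarray*}
Since $\hb=\tb=\b$, the two covariant derivatives differ only through the connection, $\hbij-\tbij=-(\hGkij-\tGkij)\bk$, and contracting the Christoffel difference with $\bk$ yields
\begin{eqnarray*}
\hbij=\tbij-\big(\bi\sigma_j+\bj\sigma_i-\taij\,\tilde\sigma^k\bk\big).
\end{eqnarray*}
Thus the lemma is reduced to evaluating $\sigma_k$, $\tilde\sigma^i$ and $\tilde\sigma^k\bk$ for our specific factor.

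The decisive step is the gradient of the conformal factor. Because $b^2=a^{ij}\bi\bj$ is a scalar, its ordinary derivative equals its $\a$-covariant derivative; combined with $\nabla^\a a=0$ and $\bij=\rij+\sij$ this gives $\partial_k(b^2)=2b^lb_{l|k}=2(r_k+s_k)$. Hence $\sigma_k=2\rho'(r_k+s_k)$ with $\rho':=\ud\rho/\ud(b^2)$, and $\sigma_0=2\rho'(r_0+s_0)$. For the raised and contracted versions I would invert $\taij=\aij-\kappa\bi\bj$ by the Sherman--Morrison formula, $\tilde a^{ij}=a^{ij}+\frac{\kappa}{1-\kappa b^2}b^ib^j$ (legitimate by (\ref{conditiononk})), and use $b^l r_l=r$ together with $b^l s_l=0$, the latter forced by the antisymmetry of $\sij$. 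This produces
\begin{eqnarray*}
\tilde\sigma^i=2\rho'\Big(r^i+s^i+\frac{\kappa}{1-\kappa b^2}rb^i\Big),\qquad\tilde\sigma^k\bk=\frac{2\rho'r}{1-\kappa b^2},
\end{eqnarray*}
the second equality coming from $\tilde a^{kl}\bk=\frac{b^l}{1-\kappa b^2}$.

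Substituting these data into the two displayed reductions above, and then replacing $\ta^2=\a^2-\kappa\b^2$ and $\taij=\aij-\kappa\bi\bj$, reproduces verbatim the claimed formulas for $\hG$ and $\hbij$. Both the conformal transformation law and the identity $\partial_k(b^2)=2(r_k+s_k)$ are standard, so I expect no conceptual difficulty here. The step most prone to error is the index bookkeeping in the contractions with $\tilde a^{ij}$, and in particular the fact that $b^l s_l$ vanishes while $b^l r_l=r$ survives; this is precisely what separates the $r$-terms from the $s$-terms and fixes the coefficient $\frac{\kappa}{1-\kappa b^2}$ in front of $rb^i$, so it is where I would be most careful.
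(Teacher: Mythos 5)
Your proposal is correct and complete: the classical conformal transformation law for the Christoffel symbols, combined with $\partial_k(b^2)=2(r_k+s_k)$, the inverse $\tilde a^{ij}=a^{ij}+\frac{\kappa}{1-\kappa b^2}b^ib^j$ of $\taij=\aij-\kappa\bi\bj$, and the contractions $b^lr_l=r$, $b^ls_l=0$, reproduces both displayed formulas verbatim, including the coefficient $\frac{\kappa}{1-\kappa b^2}$ and the factor $\ta^2=\a^2-\kappa\b^2$. Note that the present paper only quotes Lemma \ref{beta2} from \cite{yct-dhfp} without reproducing its proof; your computation is essentially the standard derivation of this $\b$-deformation formula, so there is nothing to correct.
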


\begin{lemma}\cite{yct-dhfp}\label{beta3}
Let $\ba=\ha$, $\bb=\nu(b^2)\hb$. Then
\begin{eqnarray*}
\bG=\hG,\qquad\bbij=\nu\hbij+2\nu'\bi(\rj+\sj).
\end{eqnarray*}
\end{lemma}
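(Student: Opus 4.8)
The two assertions are of very different character, and I would dispatch them in turn.

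For the first, $\bG=\hG$, I would simply observe that $\ba=\ha$ by definition, so the two Riemannian metrics are literally identical; their Christoffel symbols coincide, $\bGkij=\hGkij$, and since the spray coefficients of a Riemannian metric are determined by $G^i=\f{1}{2}\Gamma^i{}_{jk}y^jy^k$, the sprays agree. No $1$-form enters this half of the computation, so the rescaling $\bb=\nu\hb$ is irrelevant to it.

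For the second assertion I would exploit the same fact $\ba=\ha$, which guarantees that the covariant differentiation operators attached to $\ba$ and $\ha$ are identical (the connection coefficients satisfy $\bGkij=\hGkij$). Writing the components as $\bbi=\nu(b^2)\hbi$ and applying the Leibniz rule to the covariant derivative of the product of the scalar $\nu(b^2)$ with the $1$-form $\hb$ gives
\begin{eqnarray*}
\bbij=\big(\partial_j\nu(b^2)\big)\hbi+\nu\,\hbij,
\end{eqnarray*}
since the covariant derivative of a scalar coincides with its partial derivative, while the connection terms are exactly those already present in $\hbij$. It then remains to expand $\partial_j\nu(b^2)=\nu'\,\partial_j(b^2)$. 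Here $b^2$ denotes the original norm $\|\b\|_\a^2=a^{kl}b_kb_l$, a scalar function on $U$, so its partial derivative equals its $\a$-covariant derivative; by metric compatibility $a^{kl}{}_{|j}=0$, whence $\partial_j(b^2)=2b^lb_{l|j}=2b^l(r_{lj}+s_{lj})=2(\rj+\sj)$, using $b_{l|j}=r_{lj}+s_{lj}$ together with the abbreviations $\rj=r_{lj}b^l$ and $\sj=s_{lj}b^l$. Finally, since $\hb=\tb=\b$ throughout the first two deformations, one has $\hbi=\bi$, and substituting delivers $\bbij=\nu\hbij+2\nu'\bi(\rj+\sj)$, as claimed.

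The computation is short, and I expect the only delicate point to be the bookkeeping of the metrics: one must recognize that the argument $b^2$ of $\nu$ is the \emph{original} norm $\|\b\|_\a^2$, so that $\partial_j(b^2)$ is to be expanded with the $\a$-covariant derivative, producing precisely the $\a$-quantities $\rj$ and $\sj$ that appear in the statement, rather than any $\ha$-counterparts. Once this is settled every remaining step is forced.
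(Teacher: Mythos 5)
Your proof is correct and is the standard verification: since $\ba=\ha$ by definition, the sprays and the covariant derivative operators coincide, and the Leibniz rule together with the identity $\partial_j(b^2)=2b^lb_{l|j}=2(\rj+\sj)$ (with $b^2=\|\b\|_\a^2$ the \emph{original} norm, exactly the point you rightly flag) plus $\hbi=\tbi=\bi$ delivers $\bbij=\nu\hbij+2\nu'\bi(\rj+\sj)$. Note that the paper states this lemma without proof, citing \cite{yct-dhfp}; your computation is the same routine argument given there, so there is no divergence of approach to report.
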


\section{Proof of Theorem \ref{main1}}
Suppose $\a$ and $\b$ satisfy (\ref{conditions}). It is easy to verify that
\begin{eqnarray*}
\rij&=&c\aij+\theta_i\bj+\theta_j\bi,\\
\sij&=&\theta_i\bj-\theta_j\bi,\\
(b^2)_{x^l}&=&2(r_l+s_l)=2\bar cb_l,\\
\a_{x^l}&=&\f{y_m}{\a}\pp{G^m_\a}{y^l}=2\a^{-1}(\a^2\theta_l+2\theta y_l),\\
\b_{x^l}&=&b_{m|l}y^m+b_m\pp{G^m_\a}{y^l}=\bar cy_l+2\b\theta_l+4\theta b_l,\\
s_{y^l}&=&\a^{-2}(\a b_l-sy_l),
\end{eqnarray*}
where $\bar c=(c+2\theta_kb^k)$ and $y_i=\aij y^j$. Combining with the above equalities we obtain
\begin{eqnarray}
[F^2]_{x^l}&=&2\p^2\a\a_{x^l}+2\p\po\a^2\cdot2(r_l+s_l)+2\p\pt\a^2\cdot\f{\a\b_{x^l}-\b\a_{x^l}}{\a^2}\nonumber\\
&=&2\p\left\{2(\p-s\pt)(\a^2\theta_l+2\theta y_l)+2\po\bar c\a^2 b_l+\pt\a(\bar c y_l+2\b\theta_l+4\theta b_l)\right\}\label{temp1}
\end{eqnarray}
and
\begin{eqnarray*}
[F^2]_{x^ky^l}&=&2\pt\left\{2(\p-s\pt)(\a^2\theta_k+2\theta y_k)+2\po\bar c\a^2 b_k+\pt\a(\bar c y_k+2\b\theta_k+4\theta b_k)\right\}\f{\a b_l-sy_l}{\a^2}\nonumber\\
&&+2\p\left\{-2s\ptt(\a^2\theta_k+2\theta y_k)+2\pot\bar c\a^2 b_k+\ptt\a(\bar c y_k+2\b\theta_k+4\theta b_k)\right\}\f{\a b_l-sy_l}{\a^2}\nonumber\\
&&+2\p\big\{4(\p-s\pt)(\theta_ky_l+y_k\theta_l+\theta a_{kl})+4\po\bar cb_ky_l+\pt(\bar c\a^{-1}y_ky_l+\bar c\a a_{kl}+2s\theta_ky_l\nonumber\\
&&+2\a\theta_kb_l+4\a^{-1}\theta b_ky_l+4\a b_k\theta_l)\big\}.
\end{eqnarray*}
Hence,
\begin{eqnarray}
[F^2]_{x^ky^l}y^k&=&2\pt\left\{6\p\theta+2\po\bar c\b+\pt\bar c\a\right\}(\a b_l-sy_l)+2\p\left\{2\pot\bar c\b+\ptt\bar c\a\right\}(\a b_l-sy_l)\nonumber\\
&&+4\p\left\{2(\p-s\pt)(\a^2\theta_l+2\theta y_l)+2\po\bar c\b y_l+\pt(\bar c\a y_l+3s\theta y_l+\a\theta b_l+2\a\b\theta_l\right\}\label{temp2}.
\end{eqnarray}

By (\ref{temp1}) and (\ref{temp2}), one can see that (\ref{dff}) is equivalent to
$$\bar c(\pt^2+\p\ptt+2s\po\pt+2s\p\pot-4\p\po)(\a b_l-sy_l)=0,$$
hence (\ref{pde1}) holds if $\bar c\neq0$.

\section{Dually flat Riemannian metrics and dually related $1$-forms}\label{dd}
Let $\a$ be a Riemannian metric with constant sectional curvature $\mu$, and $\b$ be a closed $1$-form which is also conformal with respect to $\a$, then there is a local coordinate system in which $\a$ and $\b$ can be expressed as\cite{yct-dhfp}
\begin{eqnarray}
&\displaystyle\a=\frac{\sqrt{(1+\mu|x|^2)|y|^2-\mu\langle x,y\rangle^2}}{1+\mu|x|^2},&\label{csc}\\
&\displaystyle\b=\frac{\lambda\langle x,y\rangle+(1+\mu|x|^2)\langle a,y\rangle-\mu\langle a,x\rangle\langle x,y\rangle}{(1+\mu|x|^2)^\frac{3}{2}},\label{cc}&
\end{eqnarray}
where $\lambda$ is a constant number and $a$ is a constant vector. Moreover,
\begin{eqnarray*}
\bij=\f{\lambda-\mu\langle a,x\rangle}{\sqrt{1+\mu|x|^2}}\aij.
\end{eqnarray*}
These special Riemannian metrics and $1$-forms play an important role in projective Finsler geometry\cite{yct-dhfp,yct-zhm-onan}.

In the rest of this section, we will use the above data to construct some dually flat Riemannian metrics and their dually related $1$-forms by $\b$-deformations. Firstly, it seems impossible to obtain dually flat Riemannian metrics by data (\ref{csc}) and (\ref{cc}) without any additional condition. The reason is given below.

\begin{lemma}
Given $\a$ and $\b$ as (\ref{csc}) and (\ref{cc}). Then $\a$ cann't turn to be a dually flat Riemannian metric satisfying (\ref{duallyflat}) by $\b$-deformations unless $\mu=0$ or $a=0$.
\end{lemma}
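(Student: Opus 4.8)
The plan is to compute the spray coefficients of the fully deformed metric $\ba=\ha$ explicitly, impose the dual-flatness condition (\ref{duallyflat}) on it, and show that the resulting identity in $y$ is incompatible with the prescribed forms of $\a$ and $\b$ unless $\mu=0$ or $a=0$.

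First I would exploit the rigidity of the data. Since $\b$ given by (\ref{cc}) is closed and conformal, the displayed identity $\bij=\f{\lambda-\mu\langle a,x\rangle}{\sqrt{1+\mu\xx}}\aij$ shows that $\sij=0$ and $\rij=c\,\aij$ with $c:=\f{\lambda-\mu\langle a,x\rangle}{\sqrt{1+\mu\xx}}$. Consequently every auxiliary quantity collapses: $s^i{}_0=s_0=0$, $r_{00}=c\a^2$, $r^i=cb^i$, $r_0=c\b$ and $r=cb^2$. Moreover $\a$ has constant sectional curvature $\mu$, hence is projectively flat, so $\G=Py^i$ with $P$ a multiple of $\xy$ (for the model (\ref{csc}) one has $P=-\f{\mu\xy}{1+\mu\xx}$); it is this proportionality $P\propto\xy$ that will be decisive. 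Feeding these into Lemma \ref{beta1} and then Lemma \ref{beta2}, and using $\bG=\hG$ from Lemma \ref{beta3}, I expect the deformed spray to reduce to the two-term shape
\begin{eqnarray*}
\bG=\hG=Ay^i+Bb^i,
\end{eqnarray*}
where $A=P+2\rho'c\,\b$ is homogeneous of degree one in $y$ and $B$ is an explicit degree-two combination of $\a^2$ and $\b^2$ over $1-\kappa b^2$.

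Next I would impose (\ref{duallyflat}) on $\ba=\ha$, i.e. $\hG=2\hat\theta y^i+\ha^2\hat\theta^i$ for some $1$-form $\hat\theta=\hat\theta_iy^i$. Since $\haij=e^{2\rho}(\aij-\kappa b_ib_j)$, a Sherman--Morrison inversion gives $\ha^2\hat\theta^i=\ta^2\big(a^{ij}\hat\theta_j+\f{\kappa b^ib^j\hat\theta_j}{1-\kappa b^2}\big)$ with $\ta^2=\a^2-\kappa\b^2$. Equating this with $Ay^i+Bb^i$ and collecting the $y^i$- and $b^i$-terms reduces the condition to a single vector identity whose right-hand side is $\ta^2$ times the component of $a^{ij}\hat\theta_j$ transverse to $b^i$. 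The decisive step is to show this transverse component vanishes: writing $a^{ij}\hat\theta_j=\eta b^i+w^i$ with $a_{ki}w^kb^i=0$ and contracting with $a_{ki}w^k$ annihilates the $b^i$-terms, leaving the scalar identity $(A-2\hat\theta_jy^j)(a_{ki}w^ky^i)=(a_{ki}w^kw^i)\,\ta^2$. Its left-hand side is a quadratic form in $y$ of rank at most two, whereas $\ta^2$ is positive definite of rank $n$; for $n\geq3$ this forces $w^i=0$, hence $\hat\theta_i=\eta b_i$.

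Finally, with $\hat\theta_i=\eta b_i$ the $b^i$-terms can be absorbed and the surviving $y^i$-coefficient must vanish, giving $A=2\eta\b$, that is $P=2(\eta-\rho'c)\b$. Thus $P$, and hence $\xy$, must be a pointwise multiple of $\b=b_iy^i$; equivalently $\mu x_i$ is proportional to $b_i$ as covectors for every $x$. Substituting $b_i=\f{(\lambda-\mu\langle a,x\rangle)x_i+(1+\mu\xx)a_i}{(1+\mu\xx)^{3/2}}$, the $a_i$-component of $b_i$ would then have to vanish identically; since $a$ and $x$ are linearly independent for generic $x$ once $a\neq0$, this is impossible unless $\mu=0$ or $a=0$. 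I expect the main obstacle to be the bookkeeping in assembling $\hG$ through the three deformation lemmas and, above all, the rank argument pinning $\hat\theta^i$ to the direction of $b^i$, since the dual-flatness equation a priori permits $\hat\theta^i$ to point in an arbitrary direction.
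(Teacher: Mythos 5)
Your proposal is correct and follows essentially the paper's own route: compute $\hG$ through Lemmas \ref{beta1}--\ref{beta3}, impose the form (\ref{duallyflat}), deduce that $P$ must be a pointwise multiple of $\b$ (the paper's equation (\ref{prho}), matching your $P=2(\eta-\rho'c)\b$ after renaming), and conclude from (\ref{csc})--(\ref{cc}) that this forces $\mu=0$ or $a=0$. Your transverse-component rank argument rigorously supplies a step the paper only asserts (that $\hat\theta^i$ must be parallel to $b^i$, which also subsumes the paper's intermediate constraint $\kappa'=-\kappa^2$); the sole nuance is that as stated it needs $n\geq3$, whereas positive definiteness of $\ta^2$ (a product of two linear forms is never positive definite) covers $n=2$ as well.
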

\begin{proof}
It is known that $\G=Py^i$, where
\begin{eqnarray}\label{P}
P=-\frac{\mu\xy}{1+\mu\xx}.
\end{eqnarray}
Set $\tau(x)=\frac{\lambda-\mu\langle a,x\rangle}{\sqrt{1+\mu|x|^2}}$, then $\bij=\tau(x)\aij$.

Carry out the first step of $\b$-deformations. By Lemma \ref{beta1} we have
\begin{eqnarray*}
\tG=
Py^i-\f{\tau}{2(1-\kappa b^2)}(\kappa\a^2+\kappa'\b^2)b^i.
\end{eqnarray*}
Combining with Lemma \ref{beta2} one can see that $\ta$ cann't turn to be a dually flat metric $\ha$ satisfying (\ref{duallyflat}) by the second step of $\b$-deformations unless $\tG$ is in the form $\tG=Py^i+Q\ta^2 b^i$, which means that $\kappa$ must satisfy the following equation
\begin{eqnarray}\label{kappa}
\kappa'=-\kappa^2.
\end{eqnarray}
So we assume the deformation factor $\kappa$ satisfies (\ref{kappa}) from now on.

Carry out the second step of $\b$-deformations. By Lemma \ref{beta2} we have
\begin{eqnarray*}
\hG=(P+2\tau\rho'\b)y^i-\f{\tau}{2(1-\kappa b^2)}(\kappa+2\rho')e^{-2\rho}\ha^2 b^i.\label{hGtheta}
\end{eqnarray*}
Hence, $\ha$ is dually flat and satisfies (\ref{duallyflat}) if and only if
$$P+2\tau\rho'\b=-\f{\tau}{1-\kappa b^2}(\kappa+2\rho')e^{-2\rho}b^i\hat y_i,$$
where $\hat y_i:=\haij y^j$. The above equality is equivalent to
\begin{eqnarray}\label{prho}
P=-\tau(\kappa+4\rho')\b.
\end{eqnarray}
Combining with (\ref{cc}) and (\ref{P}), it is not hard to find that the equality (\ref{prho}) will not hold unless $\mu=0$~(in this case $P=0$) or $a=0$~(in this case $\b=\frac{\lambda\xy}{(1+\mu|x|^2)^\frac{3}{2}}$ is parallel to $P$).
\end{proof}

When $a=0$, it have been proven by $\b$-deformations in \cite{yct-odfr} that the Riemannian metrics
\begin{eqnarray}\label{dfR}
\ba=\frac{\sqrt{(1+\mu|x|^2)|y|^2-\mu\langle x,y\rangle^2}}{(1+\mu|x|^2)^\frac{3}{4}}
\end{eqnarray}
are dually flat on the ball $\mathbb B^n(r_\mu)$ and the $1$-forms
\begin{eqnarray}\label{drb}
\bb=\f{\lambda\xy}{(1+\mu|x|^2)^\frac{5}{4}}
\end{eqnarray}
are dually related to $\ba$ for any constant numbers $\mu$ and $\lambda$, where the the radius $r_\mu$ is given by $r_\mu=\frac{1}{\sqrt{-\mu}}$ if
$\mu<0$ and $r_\mu=+\infty$ if $\mu\geq0$.
The above result was obtain by taking the deformation factor $\kappa=0$.

Next, let's discuss another case. Taking $\mu=0$ in (\ref{csc}) and (\ref{cc}), we get the standard Euclidean metric and its closed conformal $1$-form,
\begin{eqnarray}\label{cccc}
\a=|y|,\qquad\b=\lambda\xy+\langle a,y\rangle.
\end{eqnarray}
In this case, $\G=0$ and $\bij=\lambda\aij$. It is obviously that $\a$ is dually flat and $\b$ is dually related to $\a$.

Assume the deformation factor $\kappa$ satisfies (\ref{kappa}), by Lemma \ref{beta1} again we get
$$\tG=-\f{\lambda\kappa}{2(1-\kappa b^2)}\ta^2b^i,\qquad\tbij=\f{\lambda}{1-\kappa b^2}\taij+\lambda\kappa\bi\bj.$$
By (\ref{prho}), $\ta$ can turn to be a dually flat metric $\ha$ satisfying (\ref{duallyflat}) by the second kind of $\b$-deformations if and only if
\begin{eqnarray*}
\kappa+4\rho'=0.
\end{eqnarray*}
So we assume the deformation factor $\rho$ satisfies the above equation. In this case, by Lemma \ref{beta2} again we get
$$\hG=2\hat\theta y^i+\ha^2\hat\theta^i,\qquad\hbij=\f{\lambda}{1-\kappa b^2}(1-\f{1}{2}\kappa b^2)e^{-2\rho}\haij+2\lambda\kappa\bi\bj,$$
where $\hat\theta=-\frac{1}{4}\lambda\kappa\beta$ and $\hat\theta^i:=\hat a^{ij}\hat\theta_j$.

Carry out the third step of $\b$-deformations, then
\begin{eqnarray*}
\bG=2\bar\theta y^i+\ba^2\bar\theta^i,\qquad\bbij=\f{\lambda}{1-\kappa b^2}\nu(1-\f{1}{2}\kappa b^2)e^{-2\rho}\baij+2\lambda(\kappa+\f{\nu'}{\nu})\bi\bbj,
\end{eqnarray*}
where $\bar\theta:=\hat\theta$ and $\bar\theta^i:=\bar a^{ij}\bar\theta_j$. Hence, $\bb$ is dually related to $\ba$ if and only if
$$\lambda(\kappa+\f{\nu'}{\nu})\bi=\bar\theta_i,$$
which means that the deformation factor $\nu$ must satisfy
\begin{eqnarray*}
\f{\nu'}{\nu}=-\f{5}{4}\kappa.
\end{eqnarray*}

Obviously, $\kappa=0$ is a solution of (\ref{kappa}). As a result, $\rho$ and $\nu$ are both constants. In this case, the corresponding $\b$-deformations are just a scaling for $\a$ and $\b$. It is trivial and should not be discussed.

When $\kappa\neq0$, the solutions of (\ref{kappa}) are given by
$$\kappa=\frac{1}{b^2+\textrm{constant}}.$$
In consideration of the additional condition of $\kappa$ (\ref{conditiononk}), $\kappa$ can only be chosen as $\kappa=\frac{1}{c+b^2}$ or $\kappa=-\frac{1}{c-b^2}$ where $c$ is a positive number. Hence, the Riemannian metric $(c\pm b^2)^{-\frac{1}{4}}\sqrt{\a^2\mp(c\pm b^2)^{-1}\b^2}$ is dually flat and the $1$-form $(c\pm b^2)^{-\frac{5}{4}}\b$ is dually related. It can be verified easily that such a $1$-form is a trivial dually related $1$-form if and only if $\lambda=0$.

After a scaling, we have proven the following result.
\begin{proposition}\label{dddd}
Let $\a$ and $\b$ are given by (\ref{cccc}). Then the Riemannian metric
$$\ba=\f{\sqrt{(1+\mu b^2)\a^2-\mu\b^2}}{(1+\mu b^2)^\frac{3}{4}}$$
is dually flat and the $1$-form
$$\bb=\f{\sigma\b}{(1+\mu b^2)^\frac{5}{4}}$$
is dually related to $\ba$, where $\mu$ and $\sigma$ are constant numbers.
\end{proposition}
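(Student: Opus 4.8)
The plan is to apply the three-step $\b$-deformation machinery of Lemmas~\ref{beta1}--\ref{beta3} to the Euclidean data $\a=|y|$, $\b=\lambda\xy+\langle a,y\rangle$, tracking both the spray coefficients $\dot G^i$ and the covariant derivatives $\dot b_{i|j}$ at each stage, and then to reverse-engineer the deformation factors $\kappa,\rho,\nu$ so that the final metric $\ba$ satisfies the dual-flatness criterion (\ref{duallyflat}) and the final $1$-form $\bb$ satisfies the dually-related condition of the Definition. Concretely, I would proceed as follows. First, observe that for the data (\ref{cccc}) one has $\G=0$ and $\bij=\lambda\aij$, so here $\rij=\lambda\aij$, $\sij=0$, $r_i=\lambda b_i$, $s_i=0$, and $b^2=\lambda^2\xx+\cdots$ with $(b^2)_{x^l}=2\lambda b_l$; these are exactly the ingredients needed to evaluate the lemmas without carrying the full index gymnastics. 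Substituting into Lemma~\ref{beta1} collapses the long expressions and yields
\begin{eqnarray*}
\tG=-\f{\lambda\kappa}{2(1-\kappa b^2)}\ta^2b^i,\qquad
\tbij=\f{\lambda}{1-\kappa b^2}\taij+\lambda\kappa\bi\bj.
\end{eqnarray*}

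Next I would impose the condition that $\tG$ have the shape $Py^i+Q\ta^2b^i$ required for the second step to produce a dually flat metric; this forces $\kappa'=-\kappa^2$, exactly equation (\ref{kappa}), which I adopt as a standing assumption. Feeding $\tG,\tbij$ into Lemma~\ref{beta2} and demanding that the resulting $\hG$ take the dually-flat form (\ref{duallyflat}) reduces, via (\ref{prho}) with $P=0$, to the single scalar equation $\kappa+4\rho'=0$. With this choice I read off $\hG=2\hat\theta y^i+\ha^2\hat\theta^i$ with $\hat\theta=-\tfrac14\lambda\kappa\b$, and I compute $\hbij$; the metric $\ha$ is now dually flat, but $\hb$ is \emph{not} yet dually related because of the residual $\bi\bj$ term. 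The third step, Lemma~\ref{beta3}, scales $\b$ by $\nu(b^2)$, and requiring that the coefficient of the rank-one part of $\bbij$ match $2\bar\theta_i\bbj$ pins down $\nu'/\nu=-\tfrac54\kappa$.

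Finally I would solve the three scalar ODEs. The curvature-type equation $\kappa'=-\kappa^2$ integrates to $\kappa=(b^2+\mathrm{const})^{-1}$; the positivity constraint (\ref{conditiononk}), $1-\kappa b^2>0$, selects the admissible branches $\kappa=\pm(c\pm b^2)^{-1}$ with $c>0$. Back-substituting into $\rho'=-\tfrac14\kappa$ and $\nu'/\nu=-\tfrac54\kappa$ and integrating gives $e^{\rho}=(c\pm b^2)^{-1/4}$ and $\nu=(c\pm b^2)^{-5/4}$ up to multiplicative constants, whence $\ba=e^{\rho}\ta$ and $\bb=\nu\b$ take the stated closed forms; a harmless rescaling $c\mapsto 1/\mu$, $\lambda\mapsto$ absorbed, and renaming of constants then produces the $(1+\mu b^2)$-normalized expressions of the Proposition, with $\sigma$ the surviving free constant. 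The main obstacle I anticipate is \emph{not} any single computation but the bookkeeping in the second step: one must verify that after imposing (\ref{kappa}) the vector $\hG$ genuinely has the gradient structure $2\hat\theta y^i+\ha^2\hat\theta^i$ with a \emph{closed} $\hat\theta$ (so that it is a legitimate $1$-form in the sense of (\ref{duallyflat})), and that the leftover $\bi\bj$ terms in $\hbij$ and $\bbij$ are precisely of the rank-one type that the dually-related condition tolerates — this is where an algebra slip would silently break the argument, so I would double-check these two structural claims before trusting the final ODE reductions.
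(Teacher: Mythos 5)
Your proposal is correct and follows essentially the same route as the paper: the same three-step $\b$-deformation via Lemmas \ref{beta1}--\ref{beta3} applied to the flat data (\ref{cccc}), yielding the identical intermediate formulas for $\tG$, $\tbij$, $\hG$, $\hbij$ and the same three scalar ODEs $\kappa'=-\kappa^2$, $\kappa+4\rho'=0$, $\nu'/\nu=-\f{5}{4}\kappa$, whose integration and rescaling give the stated $(1+\mu b^2)$-normalized forms. The structural checks you flag at the end (that $\hG$ has the form (\ref{duallyflat}) and that the residual $\bi\bj$ terms are absorbed by the dually-related condition) are exactly the verifications the paper carries out, so no gap remains.
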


It is obvious that the above result cover (\ref{dfR}) and (\ref{drb}).

\section{Solutions of Equation (\ref{pde1}) and Examples}
\begin{proposition}\label{pppp}
The non-zero solutions of the basic equation (\ref{pde1}) are given by
$$\phi(b^2,s)=\sqrt{g(b^2)+2g'(b^2)s^2+sf(b^2-s^2)+\int_0^s(s^2+\sigma^2)f'(b^2-\sigma^2)\,\ud\sigma},$$
where $f(t)$ and $g(t)>0$ are two arbitrary smooth functions.
\end{proposition}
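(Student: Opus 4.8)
The plan is to linearize (\ref{pde1}) by the substitution $\Phi:=\p^2$ and then solve the resulting linear equation by reduction of order. Writing $t:=b^2$, one computes $\Phi_s=2\p\pt$, $\Phi_{ss}=2\pt^2+2\p\ptt$, $\Phi_t=2\p\po$ and $\Phi_{ts}=2\po\pt+2\p\pot$, so that twice the left-hand side of (\ref{pde1}) equals $\Phi_{ss}+2s\Phi_{ts}-4\Phi_t$. Hence (\ref{pde1}) is equivalent to the \emph{linear} second-order equation
\[
\Phi_{ss}+2s\Phi_{ts}-4\Phi_t=0,
\]
which I will call $(\mathrm L)$. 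This is the crucial simplification: the quadratic PDE becomes linear, and since $\p$ is required to be positive we may pass freely between $\p$ and $\Phi=\p^2>0$, reconstructing $\p=\sqrt{\Phi}$ at the end.

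Next I would reduce the order in $s$. Differentiating $(\mathrm L)$ once in $s$ shows $U:=\Phi_s$ satisfies $U_{ss}+2sU_{ts}-2U_t=0$, and differentiating once more shows $W:=\Phi_{ss}$ satisfies $W_{ss}+2sW_{ts}=0$ (each $s$-differentiation lowers the coefficient of the first-order term by $2$). Setting $M:=W_s$, this last equation is the transport equation $M_s+2sM_t=0$, whose characteristics are $t-s^2=\mathrm{const}$; therefore $M=m(t-s^2)$ for an arbitrary function $m$, and integrating in $s$,
\[
W=n(t)+\int_0^s m(t-\sigma^2)\,\ud\sigma,
\]
with $m,n$ arbitrary smooth functions. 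This is the complete solution of the $W$-equation.

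Then I would integrate back up. Given such a $W$, I recover $U$ from $U_s=W$ together with $U_t=\tfrac12 W_s+sW_t$ (the latter forced by $U_{ss}+2sU_{ts}-2U_t=0$ once $U_s=W$), and then $\Phi$ from $\Phi_s=U$ together with $\Phi_t=\tfrac14 U_s+\tfrac{s}{2}U_t$ (forced by $(\mathrm L)$). The key point is that the mixed-partial compatibility $\partial_t\Phi_s=\partial_s\Phi_t$ imposed at each stage collapses \emph{exactly} to the equation already in force on the lower object (the $W$-equation, resp. the $U$-equation), so no new constraint appears and each lift is unique up to an additive constant. Writing $n=4g'$ and $m=2f'$, so that $W=4g'(t)+2\int_0^s f'(t-\sigma^2)\,\ud\sigma$, and carrying out the two $s$-integrations recovers
\[
\Phi=g(t)+2g'(t)s^2+sf(t-s^2)+\int_0^s(s^2+\sigma^2)f'(t-\sigma^2)\,\ud\sigma;
\]
taking the positive square root and imposing $g>0$ (so that $\Phi>0$ near $s=0$) yields the stated $\p$.

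The substantive work, and the main obstacle, is Step~3: performing the two back-integrations and confirming that they reproduce \emph{precisely} the nested closed form above, in particular the coefficient in $2g'(t)s^2$ and the weight $(s^2+\sigma^2)$ in the integral. The safest rigorous confirmation is to substitute the claimed $\Phi$ back into $(\mathrm L)$, splitting $\Phi$ into its $g$-part and its $f$-part; the $g$-part cancels immediately, while the $f$-part requires an integration by parts in $\sigma$ using $\tfrac{\ud}{\ud\sigma}f'(t-\sigma^2)=-2\sigma f''(t-\sigma^2)$, after which the two contributions $\pm 8s\,f'(t-s^2)$ cancel. Completeness — that these are \emph{all} nonzero solutions — follows because $\Phi\mapsto\Phi_{ss}$ carries solutions of $(\mathrm L)$ onto solutions of the $W$-equation with kernel $\{A+Bs\}$, and this two-dimensional kernel is itself absorbed into the $(f,g)$-family (constant $f$ gives $\Phi=(\mathrm{const})s$, constant $g$ gives $\Phi=\mathrm{const}$). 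The one delicacy to check is the characteristic degeneracy of $(\mathrm L)$ at $s=0$, which must be addressed to legitimize using $s=0$ as the base point of the inner integral.
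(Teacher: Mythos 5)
Your proposal is correct, and its spine coincides with the paper's proof: both linearize (\ref{pde1}) by passing to $\psi:=\phi^2$ (your $\Phi$), both reduce order by differentiating in $s$, and both ultimately integrate along the characteristics $b^2-s^2=\mathrm{const}$. Where you differ is in the mechanics. The paper differentiates only \emph{once}: setting $\varphi:=\psi_2$, it observes via the substitution $u=b^2-s^2$, $v=s$ that the once-differentiated equation reads $\frac{\partial}{\partial v}(\varphi-s\varphi_2)=0$, so $\varphi-s\varphi_2=f(b^2-s^2)$, a first-order linear ODE in $s$ solved in closed form; one $s$-integration then yields $\psi$ with two free functions $g(b^2)$, $h(b^2)$, and the information lost in differentiating is recovered by substituting back into the second-order equation, which pins $h=4g'$. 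You instead differentiate \emph{twice}, down to the pure transport equation $M_s+2sM_t=0$, and climb back through the overdetermined systems $U_s=W$, $U_t=\tfrac{1}{2}W_s+sW_t$ and $\Phi_s=U$, $\Phi_t=\tfrac{1}{4}U_s+\tfrac{s}{2}U_t$; your compatibility checks are exactly right, and they have the pleasant side effect that the constraint $h=4g'$ never needs to be imposed by hand --- it is forced by evaluating $\Phi_t=\tfrac{1}{4}U_s$ at $s=0$. Your kernel accounting (the $\{A+Bs\}$ kernel of $\Phi\mapsto\Phi_{ss}$ being absorbed by the additive constants in $g$ and $f$) makes explicit the completeness claim that the paper handles implicitly by carrying arbitrary integration functions at each stage; that is a genuine, if modest, gain in rigor. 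Two small corrections: in your verification step the cancellation occurs with coefficient $2$, not $8$ --- the three contributions are $2sf'(b^2-s^2)$ from $2s\Phi_{ts}$, $-4sf'(b^2-s^2)$ from $-4\Phi_t$, and $+2sf'(b^2-s^2)$ from the integration by parts $\int_0^s\sigma^2f''(b^2-\sigma^2)\,\ud\sigma=-\tfrac{s}{2}f'(b^2-s^2)+\tfrac{1}{2}\int_0^sf'(b^2-\sigma^2)\,\ud\sigma$ --- and the ``degeneracy at $s=0$'' you flag as a delicacy is in fact a non-issue: the characteristic field $\partial_s+2s\partial_t$ is nowhere vanishing, and every characteristic $b^2-s^2=c\geq0$ meets $s=0$ inside the natural domain $\{|s|\leq b\}$, so $s=0$ is a legitimate base point for all the inner integrals.
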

\begin{proof}
Let $\psi:=\phi^2$, then $\psi$ satisfies
\begin{eqnarray}\label{psi22}
\psi_{22}+2s\psi_{12}-4\psi_1=0.
\end{eqnarray}
Differentiating the above equation with respect to $s$, one obtain
$$\psi_{222}+2s\psi_{122}-2\psi_{12}=0.$$
Let $\varphi:=\psi_2$, then $\varphi$ satisfies
\begin{eqnarray}\label{varphi}
\varphi_{22}+2s\varphi_{12}-2\varphi_1=0.
\end{eqnarray}
Considering the variable substitution
$$u=b^2-s^2,\quad v=s,$$
then $b^2=u+v^2$, $s=v$. So by (\ref{varphi}) we have
$$\f{\partial}{\partial v}(\varphi-s\varphi_2)=0,$$
which means $\varphi-s\varphi_2=f(b^2-s^2)$ is just a function of $b^2-s^2$. Hence, $\psi_2$ is given by
\begin{eqnarray*}
\psi_2=h(b^2)s+f(b^2-s^2)+2s\int_0^sf'(b^2-\sigma^2)\,\ud\sigma
\end{eqnarray*}
for some function $h(b^2)$. As a result, $\psi$ is given by
\begin{eqnarray}\label{psi}
\psi=g(b^2)+\f{1}{2}h(b^2)s^2+sf(b^2-s^2)+\int_0^s(s^2+\sigma^2)f'(b^2-\sigma^2)\,\ud\sigma
\end{eqnarray}
for some function $g(b^2)$. Putting (\ref{psi}) into (\ref{psi22}) yields $h=4g'$.

Finally, by taking $s=0$ we know that the function $g$ must be positive.
\end{proof}

Some typical examples are listed below.

\begin{example}
Take $f(t)=0$, then
$$\phi(b^2,s)=\sqrt{g(b^2)+2g'(b^2)s^2}.$$
In this case, we obtain a class of dually flat Riemannian metrics
$$F=\sqrt{g(b^2)\a^2+2g'(b^2)\b^2},$$
where $\a$ and $\b$ satisfy (\ref{conditions}).
\end{example}

\begin{example}
Take $f(t)=\f{2\varepsilon}{(1-\kappa t)^\frac{3}{2}}$ and $g(t)=\f{1}{1-\kappa t}$, then
$$\phi(b^2,s)=\f{\sqrt{1-\kappa b^2+2\kappa s^2+2\varepsilon s\sqrt{1-\kappa b^2+\kappa s^2}}}{1-\kappa b^2}.$$
In particular, $\phi(b^2,s)=\frac{\sqrt{1-b^2+s^2}+s}{1-b^2}$ when $\kappa=1$ and $\varepsilon=1$, in this case, the corresponding general $\ab$-metrics are Randers metrics. $\phi(b^2,s)=\sqrt{1+s}$ when $\kappa=0$ and $\varepsilon=\frac{1}{2}$, in this case, the corresponding general $\ab$-metrics are those $\ab$-metrics given in the form $F=\sqrt{\a(\a+\b)}$.
\end{example}

\begin{example}
Take $f(t)=3(1+t)$ and $g(t)=(1+t)^\frac{3}{2}$, then
$$\phi(b^2,s)=(\sqrt{1+b^2}+s)^\frac{3}{2}.$$
In this case, the corresponding dually flat Finsler metrics determined by Theorem \ref{main1} and Proposition \ref{dddd} are given by (\ref{oneone}).
\end{example}

\begin{example}
Take $f(t)=\f{2}{\sqrt{1+t}}$ and $g(t)=\ln\sqrt{1+t}$, then
$$\phi(b^2,s)=\sqrt{\ln\sqrt{1+b^2}+\arcsin\f{s}{\sqrt{1+b^2}}+\f{s(\sqrt{1+b^2-s^2}+s)}{1+b^2}}.$$
\end{example}

\begin{example}
Take $f(t)=\f{2}{\sqrt{1-t}}$ and $g(t)=\ln\sqrt{1-t}$, then
$$\phi(b^2,s)=\sqrt{\mathrm{arcsinh}\,\f{s}{\sqrt{1-b^2}}+\f{s}{\sqrt{1-b^2+s^2}+s}}.$$
\end{example}

\begin{example}
Take $f(t)=4\sqrt{1+t}$ and $g(t)=1$, then
$$\phi(b^2,s)=\sqrt{1+3s\sqrt{1+b^2-s^2}+(1+b^2+2s^2)\arcsin\f{s}{\sqrt{1+b^2}}}.$$
\end{example}

\begin{example}
Take $f(t)=4\sqrt{1-t}$ and $g(t)=1+\frac{1}{2}(1-t)\ln(1-t)$, then
$$\phi(b^2,s)=\sqrt{1+s^2+3s\sqrt{1-b^2+s^2}+(1-b^2-2s^2)\mathrm{arcsinh}\,\f{s}{\sqrt{1-b^2}}}.$$
\end{example}

\begin{example}
Take $f(t)=\Re\frac{2i}{\sqrt{e^{ip}+t}}$ and $g(t)=\Re\ln\sqrt{e^{ip}+t}$, then
$$\phi(b^2,s)=\sqrt{\Re\left\{\ln\sqrt{e^{ip}+b^2}+i\arctan\f{s}{\sqrt{e^{ip}+b^2-s^2}}
+\f{is}{\sqrt{e^{ip}+b^2-s^2}+is}\right\}}.$$
\end{example}

\section{Proof of Theorem \ref{main2}}
\begin{proof}[Proof of Theorem \ref{main2}]
By Theorem \ref{main1} we know that $\phi$ satisfies the basic equation (\ref{pde1}) and hence $(\phi)^2_2$ satisfies (\ref{varphi}). In other hand, we have proved in \cite{szm-yct-oacfe} that if $\check\a$ is projectively flat and $\check\b$ is closed and conformal with respect to $\check\a$, then the general $\ab$-metric $\check F=\check\a\check\phi(\check b^2,\frac{\check\b}{\check\a})$ is projectively flat if and only if the function $\check\phi$ satisfies
\begin{eqnarray}\label{checkp}
\check\phi_{22}=2(\check\phi_1-s\check\phi_{12}),
\end{eqnarray}
i.e. the equation (\ref{varphi}). So the first part of Theorem \ref{main2} holds immediately.

Conversely, by assumption and the fact mentioned above, we know that the function $\check\phi$ satisfies (\ref{checkp}). According to the argument in the proof of Proposition \ref{pppp}, the solutions of (\ref{checkp}) are given by
\begin{eqnarray*}
\check\phi(b^2,s)=h(b^2)s+f(b^2-s^2)+2s\int_0^sf'(b^2-\sigma^2)\,\ud\sigma.
\end{eqnarray*}
It is obvious that $\check\phi_2(b^2,0)=h(b^2)$, hence
\begin{eqnarray*}
\p(b^2,s)&=&\sqrt{\int_0^s\check\phi(b^2,\varsigma)\,\ud\varsigma+\f{1}{4}\int_0^{b^2}\check\phi_2(\iota,0)\,\ud\iota+C}\\
&=&\sqrt{\f{1}{4}\int_0^{b^2}h(\iota)\,\ud\iota+C+\f{1}{2}h(b^2)s^2+sf(b^2-s^2)+\int_0^s(s^2+\sigma^2)f'(b^2-\sigma^2)\,\ud\sigma}
\end{eqnarray*}
are just the solutions of (\ref{pde1}) by Proposition \ref{pppp}. So the second part of Theorem \ref{main2} holds.
\end{proof}

Finally, we provide an example of Theorem \ref{main2}. Taking $\phi(b^2,s)=(\sqrt{1+b^2}+s)^\frac{3}{2}$, then
$$\check\phi:=(\phi^2)_2=3(\sqrt{1+b^2}+s)^2,$$
In this case, the corresponding general $\ab$-metric $F=\check\a\check\phi(\check b^2,\frac{\check\b}{\check\a})$ constructed by Theorem \ref{main2} is just a Berwald type of metric after a scaling\cite{szm-yct-oesm}.

\noindent Changtao Yu\\
School of Mathematical Sciences, South China Normal
University, Guangzhou, 510631, P.R. China\\
aizhenli@gmail.com
\end{document}